\tikzstyle{vx}=[inner sep=1pt,circle,fill=black,draw=black]
\newtheorem{theorem}{Theorem}[section]
\newtheorem{lemma}[theorem]{Lemma} 
\newtheorem{corollary}[theorem]{Corollary}
\newtheorem*{main}{Main Theorem}
\newtheorem*{mainconj}{Main Conjecture}
\theoremstyle{definition} 
\newtheorem*{definition}{Definition}
\newtheoremstyle{named}%
  {}{}						
  {\upshape}				
  {0pt}{\bfseries}			
  {.}						
  {.5em}					
  {\thmname{#1}\thmnote{ #3}}  
\theoremstyle{named}
\newcommand{\wo}{\setminus}
\newcommand{\set}[1]{\left\{{#1}\right\}} 
\newcommand{\setof}[2]{\left\{{#1}\,:\,{#2}\right\}}
\newcommand{\of}{\subseteq}
\newcommand{\intersection}{\bigcap}
\newcommand{\symd}{\bigtriangleup}
\newcommand{\adj}{\sim}
\newcommand{\nadj}{\nsim}
\newcommand{\s}{\sigma}
\newcommand{\N}{\mathbb{N}}
\newcommand{\ax}{A_{x\bar{y}}}
\newcommand{\axy}{A_{xy}}
\newcommand{\ay}{A_{\bar{x}y}}
\newcommand{\anxy}{A_{\overline{xy}}}
\newcommand{\G}{\mathcal{G}}
\newcommand{\Gxy}{G_{x\rightarrow y}}
\renewcommand{\complement}{\overline}
\renewcommand{\d}{\delta}
\newcommand{\D}{\Delta}
\newcommand{\ind}{i}
\renewcommand{\k}{k}
\newcommand{\w}{w}
\newcommand{\wbar}{\overline{w}}
\newcommand{\Gp}[1]{G_{#1}}
\newcommand{\tri}{%
    \tikz[scale=0.25]{%
        \foreach \pos/\name in {{(0,0)}/A,{(60:1)}/B,{(1,0)}/C}
            \path \pos node[circle, fill=black, inner sep=0.2mm] (\name) {};
        \draw[thick] (A) -- (B) -- (C) -- (A);
    }%
}
\newcommand{\cher}{%
    \tikz[scale=0.25]{%
        \foreach \pos/\name in {{(0,0)}/A,{(60:1)}/B,{(1,0)}/C}
            \path \pos node[circle, fill=black, inner sep=0.2mm] (\name) {};
        \draw[thick] (A) -- (B) -- (C);
    }%
}
\newcommand{\angel}{%
    \tikz[scale=0.25]{%
        \foreach \pos/\name in {{(0,0)}/A,{(60:1)}/B,{(1,0)}/C}
            \path \pos node[circle, fill=black, inner sep=0.2mm] (\name) {};
        \draw[thick] (A) -- (B);
    }%
}
\newcommand{\emp}{%
    \tikz[scale=0.25]{%
        \foreach \pos/\name in {{(0,0)}/A,{(60:1)}/B,{(1,0)}/C}
            \path \pos node[circle, fill=black, inner sep=0.2mm] (\name) {};
    }%
}
\DeclarePairedDelimiter{\abs}{\lvert}{\rvert}
\DeclarePairedDelimiter{\absz}{\lvert}{\rvert_0}
\DeclarePairedDelimiter{\abso}{\lvert}{\rvert_1}
\begin{document}

\title{The maximum number of complete subgraphs of fixed size in a graph with given maximum degree}
\date{\today}
\author{Jonathan Cutler\\
\small{Montclair State University}\\
\small{\texttt{jonathan.cutler@montclair.edu}}
 \and 
A.J.~Radcliffe\\
\small{University of Nebraska-Lincoln}\\
\small{\texttt{jamie.radcliffe@unl.edu}}}
\maketitle
\begin{abstract}
	In this paper, we make progress on a question related to one of Galvin that has attracted substantial attention recently.  The question is that of determining among all graphs $G$ with $n$ vertices and $\D(G)\leq r$, which has the most complete subgraphs of size $t$, for $t\geq 3$.  The conjectured extremal graph is $aK_{r+1}\cup K_b$, where $n=a(r+1)+b$ with $0\leq b\leq r$.  Gan, Loh, and Sudakov proved the conjecture when $a\leq 1$, and also reduced the general conjecture to the case $t=3$.  We prove the conjecture for $r\leq 6$ and also establish a weaker form of the conjecture for all $r$.
\end{abstract}

\maketitle

\section{Introduction} 
\label{sec:intro} 

Galvin \cite{G} conjectured that the graph on $n$ vertices with minimum degree at least $d$ having the most independent sets is $K_{d,n-d}$, provided $n\geq 2d$.  He proved the conjecture for $d=1$, and also for fixed $d$ and large $n$.  Alexander, Mink, and the first author \cite{ACM} proved this conjecture for bipartite graphs.  Further, Galvin conjectured that the same graph has the largest number of independent sets of each size $t\geq 3$, even though it has fewer independent sets of size two than a $d$-regular graph.  Engbers and Galvin \cite{EG} proved this stronger conjecture for $d=2,3$.  Other progress on the stronger conjecture was made by Law and McDiarmid \cite{LM} and Alexander and Mink \cite{AM}. Quite recently, Gan, Loh, and Sudakov \cite{GLS} proved this conjecture. We write $\ind_t(G)$ for the number of independent sets in $G$ of size $t$.

\begin{theorem}[Gan, Loh, Sudakov \cite{GLS}]\label{thm:gls}
    If $G$ is a graph with $n$ vertices having $\d(G)\ge d$ where $n\ge 2d$ then 
    \[
        \ind_t(G) \le \ind_t(K_{d,n-d}),
    \]
    provided $t\ge 3$.
\end{theorem}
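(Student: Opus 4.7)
The plan is a local compression argument on non-edges. For a non-edge $xy$ of $G$, partition the $t$-independent sets by containment of $x$ and $y$:
\[
    \ind_t(G) = \axy + \ax + \ay + \anxy.
\]
Let $\Gxy$ denote the graph obtained from $G$ by replacing $N_G(x)$ with $N_G(y)$, so that $x$ and $y$ become twins, and let $G_{y\to x}$ be defined symmetrically. A direct accounting in $\Gxy$ gives $\ax(\Gxy) = \ay(\Gxy) = \ay(G)$ (each counts $(t-1)$-independent sets of $G$ lying in $V(G)\wo N_G(y)\wo\{x,y\}$), $\anxy(\Gxy) = \anxy(G)$ (edges away from $x$ are unchanged), and $\axy(\Gxy) \ge \axy(G)$ since the region allowed for the remaining $t-2$ vertices only grows. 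Summing with the symmetric inequality for $G_{y\to x}$ yields
\[
    \ind_t(\Gxy) + \ind_t(G_{y\to x}) \ge 2\,\ind_t(G),
\]
so at least one of the two modifications does not decrease $\ind_t$.

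The principal obstacle is preserving $\d(G)\ge d$: under $\Gxy$, each vertex in $N_G(x)\wo N_G(y)$ loses an edge, and if any such vertex has degree exactly $d$ the move is forbidden; the symmetric situation blocks $G_{y\to x}$. My plan is to select the non-edge $xy$ carefully -- for instance restricting to pairs of minimum-degree vertices -- and to perform a case analysis on how minimum-degree vertices are distributed in $N(x) \symd N(y)$ to show that at least one of the two moves remains admissible as long as $G$ is not already complete multipartite. The hypothesis $t\ge 3$ should enter at this stage through a refined version of the local move needed in corner cases; note that for $t=2$ the extremum is a $d$-regular graph (e.g.\ $C_n$ with $d=2$) rather than $K_{d,n-d}$, so some $t\ge 3$ condition must rule that scenario out. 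This is the technical heart of the argument.

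Iterating admissible compressions drives $G$ to a complete multipartite graph with parts $n_1\le\cdots\le n_k$ satisfying $n_k\le n-d$. The proof concludes with a convexity argument: since $\binom{\cdot}{t}$ is convex for $t\ge 2$, the quantity $\sum_i \binom{n_i}{t}$ subject to $\sum_i n_i=n$ and each $n_i\le n-d$ is maximized by placing $n-d$ in one part and the remaining $d$ in a single second part, yielding $\ind_t(K_{d,n-d})$ as the extremum.
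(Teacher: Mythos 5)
First, note that the paper does not prove this statement: Theorem~\ref{thm:gls} is quoted as a black box from Gan, Loh, and Sudakov \cite{GLS}, whose actual proof goes through the complementary formulation (counting cliques of size $t$ in a graph of bounded maximum degree) and an induction that deletes the closed neighborhood of a carefully chosen vertex --- a genuinely different route from your symmetrization plan. So the only question is whether your proposal stands on its own, and it does not.

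The pieces you have written out are correct: the accounting $\ind_t(\Gxy)+\ind_t(G_{y\to x})\ge 2\ind_t(G)$ is a valid Zykov-type symmetrization identity, and the closing convexity step over complete multipartite graphs with all parts of size at most $n-d$ is fine. The fatal problem is the step you defer as ``the technical heart.'' It is not a corner case to be handled by careful choice of $xy$; it is the entire theorem. If $G$ is $d$-regular (or merely has the property that every vertex of $N(x)\setminus N(y)$ has degree exactly $d$ for every candidate pair), then \emph{no} compression is admissible, and the process stalls immediately. For example, with $d=2$ the cycle $C_n$ admits no admissible move whatsoever, is not complete multipartite for $n\ge 6$, and yet must still be shown to satisfy the theorem. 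Since regular and near-regular graphs are exactly the hard instances here, the residual class on which your machinery says nothing carries essentially all of the content of the result.

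There is a second, structural reason the plan cannot be completed as written: every step you actually prove is valid for $t=2$ as well (for $t=2$ the compression identity even holds with equality, since $e(\Gxy)+e(G_{y\to x})=2e(G)$), and the convexity endgame also ``works'' for $t=2$. But the conclusion is false for $t=2$, as you yourself observe. Hence any completion of your argument must inject the hypothesis $t\ge 3$ into precisely the step you have not supplied, and it must do so in a way that distinguishes graphs like $C_n$ from $K_{d,n-d}$. Saying that $t\ge3$ ``should enter at this stage'' is therefore not a deferral of routine bookkeeping but an acknowledgment that the key idea of the proof is missing.
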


This left unanswered the question of what is the optimal graph when $n<2d$. In this range it is cleaner to state the problem in its complementary version: counting complete subgraphs in a graph of given maximum degree. In fact, this is the version of the problem on which Gan, Loh, and Sudakov worked. The current authors \cite{CR14} answered this question for the total number of complete subgraphs. For a graph $G$, we let $\k(G)$ be the number of complete subgraphs (which we also call cliques) in $G$ and $\k_t(G)$ be the number of cliques of size $t$ in $G$.

\begin{theorem}[Cutler, Radcliffe \cite{CR14}]\label{thm:ours}
	For all $n,r\in \N$, write $n=a(r+1)+b$ with $0\le b\le r$. If $G$ is a graph on $n$ vertices with $\Delta(G)\leq r$, then 
	\[
		\k(G) \leq \k(aK_{r+1} \cup K_b),
	\]
	with equality if and only if $G=aK_{r+1}\cup K_b$, or $r=2$ and $G=(a-1)K_3\cup C_4$ or $(a-1)K_3\cup C_5$.
\end{theorem}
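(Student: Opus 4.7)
The plan is to proceed by strong induction on $n$. The base case $n \le r+1$ is immediate, since $K_n$ achieves the maximum and satisfies $\Delta(K_n) = n-1 \le r$. For the inductive step, the key identity is
\[
    \k(G) = \k(G - v) + \k(G[N(v)])
\]
for any $v \in V(G)$ (with cliques taken to include the empty set), combined with the bound $\k(G[N(v)]) \le 2^{\deg(v)}$. Write $f(n, r) := \k(aK_{r+1} \cup K_b)$, where $n = a(r+1) + b$ with $0 \le b \le r$. A direct computation shows that $f(n, r) - f(n-1, r) = 2^r$ when $b = 0$ and $= 2^{b-1}$ when $b \ge 1$, while $f(n, r) - f(n - r - 1, r) = 2^{r+1} - 1$ in all cases.

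Three subcases then close the induction immediately: (i) if $b = 0$, any vertex $v$ works, because $2^{\deg(v)} \le 2^r$ matches the slack; (ii) if $b \ge 1$ and $\delta(G) \le b - 1$, deleting a minimum-degree vertex works, since $2^{\deg(v)} \le 2^{b-1}$; (iii) if $G$ contains a $K_{r+1}$-subgraph, this is necessarily a component (each of its vertices has exhausted all $r$ adjacencies internally), and removing it drops $\k(G)$ by exactly $2^{r+1} - 1$, again matching the slack.

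The remaining case---$b \ge 1$ with $\delta(G) \ge b$ and $\omega(G) \le r$---is the main obstacle. Here the crucial observation is the sharpening $\k(G[N(v)]) \le 2^{\deg(v)} - 1$ whenever $G[N(v)]$ is not complete; since $\omega(G) \le r$ forbids $G[N(v)] = K_r$ at any degree-$r$ vertex, the bound improves at every such vertex. I would try to leverage this either by iterating the single-vertex reduction and accumulating the per-step savings, or by producing a local edge-addition that preserves $\Delta(G) \le r$ but does not decrease $\k(G)$, thereby forcing an extremal $G$ back into one of the three easy subcases; a careful degree-sequence analysis is likely needed to make either approach go through. The equality characterization for $r = 2$ requires extra attention, as $(a-1)K_3 \cup C_4$ and $(a-1)K_3 \cup C_5$ are also extremal; these arise precisely in the hard case when no $K_3$-component appears yet the bound is saturated, and should be identified by direct inspection at the bottom of the induction, together with verification that no other triangle-free near-extremal structures survive for larger $n$.
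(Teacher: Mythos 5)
This statement is quoted from an earlier paper of the same authors (\cite{CR14}); the present paper does not reprove it, so I can only assess your argument on its own terms. Your setup is sound: the recursion $k(G)=k(G-v)+k(G[N(v)])$, the computation of the increments $f(n,r)-f(n-1,r)$ and $f(n,r)-f(n-r-1,r)$, and the disposal of the three easy subcases (any $v$ when $b=0$; a minimum-degree vertex when $\delta(G)\le b-1$; a $K_{r+1}$, which is indeed necessarily a component) are all correct.

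However, the proof has a genuine gap exactly where you locate it, and the tool you propose for closing it is quantitatively hopeless. In the remaining case ($b\ge 1$, $\delta(G)\ge b$, $\omega(G)\le r$) you need, for some vertex $v$, the bound $k(G[N(v)])\le 2^{b-1}$, but every vertex has degree at least $b$, so the trivial bound gives only $2^{\deg(v)}$, which may be as large as $2^{r}$. The sharpening $k(G[N(v)])\le 2^{\deg(v)}-1$ for non-complete neighborhoods saves an additive $1$ against a required saving of a multiplicative factor of at least $2$ (and up to $2^{r-b+1}$); iterating the deletion does not telescope these deficits into anything useful, and no concrete edge-addition argument is given. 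That this case cannot be waved away is shown by $C_5$ with $r=2$ (where $\delta=b=2$, $\omega=2$, and equality holds) and more generally by the fact that all the genuinely extremal and near-extremal non-obvious configurations live precisely in this regime. The equality characterization is likewise only gestured at. As it stands, the proposal is an outline of the easy half of the theorem plus a research plan for the hard half, not a proof; the published argument in \cite{CR14} requires a substantially more delicate structural analysis (of the kind the present paper develops for the level-set version: tight edges, clusters, and local replacement) rather than a one-vertex-at-a-time induction.
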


The case $a=1$ of this theorem corresponds (in the complement) to Galvin's original conjecture (i.e., $n\geq 2d$).
The case $b=0$ was proved independently by Engbers and Galvin \cite{EG} and Wood \cite{W}.  What now remains is the level set version to Theorem~\ref{thm:ours}, conjectured by Gan, Loh, and Sudakov.

\begin{mainconj}[Gan, Loh, Sudakov \cite{GLS}]
	For all $n,r\in \N$, write $n=a(r+1)+b$ with $0\le b\le r$. If $G$ is a graph on $n$ vertices with $\Delta(G)\leq r$ and $t\geq 3$ is an integer, then 
	\[
		\k_t(G) \leq \k_t(aK_{r+1} \cup K_b).
	\]
\end{mainconj}

One striking consequence of the approach of Gan, Loh, and Sudakov in \cite{GLS} is a proof that the case $t=3$, i.e., the case of triangles, implies the general result.

\begin{theorem}[Gan, Loh, Sudakov \cite{GLS}]
	If the Gan-Loh-Sudakov Conjecture is true for $t=3$, then it is true for $t>3$.
\end{theorem}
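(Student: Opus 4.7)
The plan is to deduce the $t \ge 4$ case from the $t = 3$ case by a local-to-global argument through vertex links, combined with the Kruskal--Katona theorem. The starting point is the double-counting identity
\[
    t\,\k_t(G) = \sum_{v \in V(G)} \k_{t-1}\bigl(G[N(v)]\bigr),
\]
which reduces the problem to bounding $(t-1)$-clique counts in each neighborhood. Since $|N(v)| \le r$, each link $G[N(v)]$ is a graph on at most $r$ vertices.

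Next I would apply the Kruskal--Katona theorem to each link: for each $v$ there is a real number $x_v \in [0,r]$ with $\binom{x_v}{2} = \k_2(G[N(v)])$ (the number of triangles of $G$ containing $v$), and then $\k_{t-1}(G[N(v)]) \le \binom{x_v}{t-1}$. Summing over $v$ and using the companion identity $3\k_3(G) = \sum_v \k_2(G[N(v)])$ yields
\[
    t\,\k_t(G) \le \sum_v \binom{x_v}{t-1} \quad \text{subject to} \quad \sum_v \binom{x_v}{2} = 3\,\k_3(G).
\]
The $t=3$ hypothesis bounds $\k_3(G) \le a\binom{r+1}{3} + \binom{b}{3}$, which translates to $\sum_v \binom{x_v}{2} \le a(r+1)\binom{r}{2} + b\binom{b-1}{2}$.

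The final step is an extremization: maximize $\sum_v \binom{x_v}{t-1}$ over sequences $(x_v) \in [0,r]^n$ subject to the above sum constraint. Since $\binom{x}{t-1}$ is convex in $\binom{x}{2}$ for $t \ge 4$, the extremum is attained at a profile where as many $x_v$ as possible equal $r$, with the remainder at $0$ or at one boundary value. The conjectured extremal graph $aK_{r+1} \cup K_b$ realizes exactly such a profile: $a(r+1)$ vertices with $x_v = r$ (those in the $K_{r+1}$ components) and $b$ vertices with $x_v = b-1$ (those in $K_b$), producing $\sum \binom{x_v}{t-1} = a(r+1)\binom{r}{t-1} + b\binom{b-1}{t-1} = t\,\k_t(aK_{r+1} \cup K_b)$.

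The hard part will be showing that the extremization yields precisely this value rather than something strictly larger. The naive convexity argument, using only the bound on $\sum \binom{x_v}{2}$, gives an upper bound that can exceed $t\,\k_t(aK_{r+1}\cup K_b)$ when $0 < b < r$, because concentrating the $x_v$-mass at $r$ and $0$ is permitted by the sum constraint yet is not compatible with the actual extremal graph. Closing this gap should come from exploiting slack in Kruskal--Katona when the link is not a clique: a vertex $v$ with $x_v$ close to $r$ forces $G[N(v)]$ to be nearly complete, which in turn forces $v$ into a near-$K_{r+1}$ structural context. This coupling between the local values $x_v$ and the global component structure of $G$ is the crux: it promotes the loose convex bound into the sharp combinatorial bound matching $aK_{r+1}\cup K_b$.
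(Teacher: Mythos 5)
There is a genuine gap here, and you have in fact located it yourself. (Note that the paper does not prove this theorem; it is quoted from \cite{GLS}, so your argument must stand on its own.) The setup is fine: the identity $t\,k_t(G)=\sum_v k_{t-1}(G[N(v)])$, the Kruskal--Katona step $k_{t-1}(G[N(v)])\le\binom{x_v}{t-1}$ where $\binom{x_v}{2}=k_2(G[N(v)])$ and $x_v\le r$, and the companion identity $3k_3(G)=\sum_v\binom{x_v}{2}$ are all correct. But the extremization they lead to provably does \emph{not} give the desired bound. Since $\binom{x}{t-1}\big/\binom{x}{2}$ is increasing in $x$, the convexity argument yields only
\[
t\,k_t(G)\;\le\;\frac{\binom{r}{t-1}}{\binom{r}{2}}\cdot 3k_3(G)\;\le\;a(r+1)\binom{r}{t-1}+b\,\frac{\binom{r}{t-1}}{\binom{r}{2}}\binom{b-1}{2},
\]
and the last term strictly exceeds the target $b\binom{b-1}{t-1}$ whenever $b\ge 3$ and $t\ge 4$ (because $b-1<r$ and the ratio above is increasing). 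So the ``hard part'' you defer to the end is not a technical detail to be cleaned up: it is the entire content of the theorem. Your proposed repair --- that $x_v$ near $r$ forces $v$ into a ``near-$K_{r+1}$ structural context'' --- is a stability heuristic, not an argument; nothing in your setup couples the numerical profile $(x_v)$ to the component structure of $G$, and it is far from clear how to extract the exact extremal value from approximate structure.

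A further symptom of the problem is that you invoke the $t=3$ hypothesis exactly once, globally, for the graph $G$ itself. The hypothesis is that the conjecture holds for $t=3$ for \emph{all} $n$ and $r$, which licenses applying it to induced subgraphs (links, the graph minus a closed neighborhood or minus a clique) inside an induction; any successful reduction, including the one in \cite{GLS}, exploits this flexibility rather than a single global application of the triangle bound followed by convexity. As written, your argument establishes the theorem only in the cases $b\le 2$, where $\binom{b}{3}=\binom{b}{t}=0$ and the naive ratio bound already closes.
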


In this paper we make partial progress on the Gan-Loh-Sudakov Conjecture. 

\begin{main}
	Let $G$ be a graph with $n$ vertices and $\D(G)\leq r$.  Write $n=a(r+1)+b$ where $0\leq b\leq r$.
	\begin{enumerate}
		\item There exists $a'\ge 0$ and some graph $H$ with $n(H)\le 2r(r+1)/(3\sqrt{3})$ such that 
    \[
        n = a'(r+1) + n(H) \qquad\text{and}\qquad \k_t(G) \leq \k_t(a'K_{r+1} \cup H).
    \]
		\item If $r\leq 6$, then $k_3(G)\leq k_3(aK_{r+1}\cup K_b)$, i.e., the Gan-Loh-Sudakov Conjecture is true for $r\leq 6$.
	\end{enumerate}   
\end{main}

Following a similar approach to that in \cite{CR14}, we analyze structures in $G$ that we call \emph{clusters}, maximal cliques with maximum-size common neighborhoods.  We aim to prove that clusters are either \emph{foldable} or \emph{dischargeable}.  In Section~\ref{sec:fad}, we introduce our techniques.  In Section~\ref{sub:folding}, we discuss a folding operation which performs a local modification, turning a cluster and its neighborhood into a $K_{r+1}$.  A cluster is foldable if this increases the number of triangles.  In Section~\ref{sub:mu}, we discuss a parameter $\mu$ associated with a cluster which determines its foldability.  In Section~\ref{sub:discharging}, we explain a discharging technique for proving that the average number of common neighbors of adjacent vertices is small.  Finally, in Section~\ref{sec:theorem}, we prove the Main Theorem.

Our notation is standard.  We let $[n]=\set{1,2,\ldots,n}$ and, for a set $S$, we let $\binom{S}k=\setof{X\of S}{\abs{X}=k}$.


\section{Folding and discharging} 
\label{sec:fad}

\subsection{Folding} 
\label{sub:folding}

Our approach to the proof of the Main Theorem is as follows. We consider a graph $G$ on $n$ vertices with $\D(G)\le r$.  We assign weights to the edges of $G$: if $xy\in E(G)$ then we set
\[
	\w(xy) = \abs*{N(x)\cap N(y)},
\]
the number of common neighbors of $x$ and $y$.  Equivalently, $\w(xy)$ is the number of triangles of $G$ containing the edge $xy$.  Clearly,
\begin{equation*}\label{eq:kbound}
	\k_{3}(G) = \frac1{3} \sum_{xy\in E(G)} \w(xy).
\end{equation*}
Thus if the average weight of edges is small then there will not be many triangles in total. Note that, since $\D(G)\leq r$, we have $\w(xy) \le r-1$ for all $xy\in E(G)$.
\begin{definition}
	An edge $xy\in E(G)$ is called \emph{tight} if $\w(xy)=r-1$.  A clique in $G$, all of whose edges are tight, is called a \emph{tight clique}.  A maximal tight clique is a \emph{cluster}.
\end{definition}
Note that the edge $xy$ is tight exactly if $N[x]=N[y]$ (where $N[x]$ is the closed neighborhood of $x$) and $d(x)=d(y)=r$.  Hence, the tight edge graph is the disjoint union of clusters of $G$.

If $T$ is a cluster, we will want to think about its set of common neighbors, $S=\bigcap_{x\in T} N(x)$.  Each vertex of $T$ has closed neighborhood $T\cup S$.  Vertices in $S$ can have neighbors in $T$, $S$, and the rest of the graph.  All vertices in $T$ have degree $r$, but this is not necessarily true for vertices in $S$.

\begin{definition}
	Suppose that $G$ is a graph with $\D(G)\le r$ and $T\of V(G)$ is a cluster of size $t$. We let $S_T = \intersection_{x\in T} N(x)$ and define a new graph by converting $T\cup S_T$ into a clique (of size $r+1$) and deleting all the edges $[S_T,V(G)\wo(T\cup S_T)]$. In other words we define the \emph{folding of $G$ at $T$} by
	\[
		\Gp{T} = G + \binom{S_T}{2} - [S_T,V(G)\wo(T\cup S_T)],
	\]
where, for sets $U$ and $V$, $[U,V]=\setof{uv}{u\in U, v\in V}$.  Also, we define
	\[
		R_T = \complement{G[S_T]},
	\]
i.e., the graph on $S_T$ whose edges are those not in $G$.  Note that since $T$ is a maximal tight clique, $\d(R)\geq 1$.
\end{definition}

Throughout the remainder of the paper, if $T$ is a cluster, we consistently write $t$ for $\abs{T}$, $S$ for $S_T$, $s$ for $\abs{S_T}$, and $R$ for $R_T$, when there is no potential for confusion.

\begin{lemma}\label{lem:dR}
	With the notation of the previous definition, for all $x\in S$ we have
	\[
		\abs{N_G(x)\wo (T\cup S)} \le d_{R}(x),
	\]
	and, in particular, $\D(G_T)\leq r$.
\end{lemma}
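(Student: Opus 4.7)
The plan is to argue by a direct degree count, exploiting the fact that each vertex in the cluster has closed neighborhood exactly $T \cup S$.

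First I would observe that since every vertex $x \in T$ satisfies $d_G(x) = r$ and $N[x] = T \cup S$, we have $\abs{T \cup S} = r+1$, i.e., $t + s = r+1$. Now fix $x \in S$. By definition of $S$, $x$ is adjacent in $G$ to every vertex of $T$, contributing $t$ neighbors. The neighbors of $x$ inside $S$ are precisely the vertices of $S \setminus \{x\}$ that are \emph{not} joined to $x$ by a non-edge of $G$; since $R = \complement{G[S]}$, this contributes exactly $(s-1) - d_R(x)$. Writing $m = \abs{N_G(x) \setminus (T \cup S)}$ for the remaining neighbors, we get
\[
    r \ge d_G(x) = t + (s - 1 - d_R(x)) + m = r - d_R(x) + m,
\]
which rearranges to $m \le d_R(x)$, the first claim.

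For the statement $\D(G_T) \le r$, I would check the three classes of vertices separately. A vertex $x \in T$ has its neighborhood unchanged by the folding operation (the new edges live inside $\binom{S}{2}$ and the removed edges live in $[S, V(G) \setminus (T \cup S)]$), so $d_{G_T}(x) = r$. A vertex $x \in S$ becomes, in $G_T$, adjacent to exactly the $r$ other vertices of the clique $T \cup S$ and to nothing outside, so $d_{G_T}(x) = r$. A vertex $x \in V(G) \setminus (T \cup S)$ can only have lost edges (those in $[S, V(G) \setminus (T \cup S)]$ incident to $x$), so $d_{G_T}(x) \le d_G(x) \le r$.

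There is no real obstacle here; the content is essentially just the observation that the ``missing'' degree $r - d_G(x)$ of a vertex $x \in S$ inside the clique $T \cup S$ is exactly $d_R(x)$, and this is what limits how many outside neighbors $x$ can have. The first inequality is what will later be used when bounding how folding changes the triangle count, while the $\D(G_T) \le r$ statement ensures that $G_T$ remains a valid competitor in the extremal problem.
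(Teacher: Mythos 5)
Your proof is correct and fills in exactly the details the paper leaves as ``immediate'': the identity $t+s=r+1$ and the degree count $d_G(x)=t+(s-1-d_R(x))+m\le r$ give the first inequality, and the three-case check of degrees in $G_T$ matches the paper's one-line justification of $\D(G_T)\le r$. This is the same approach as the paper, just written out in full.
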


\begin{proof}
	The first inequality is immediate and the second follows from the fact that, after folding, vertices in $T\cup S$ have degree exactly $r$ and no other vertex has its degree increased. 
\end{proof}

We will show that in a wide range of circumstances, we have $k_3(\Gp{T})\geq k_3(G)$.  In the rest of this section, we bound the difference $k_3(\Gp{T})-k_3(G)$.  We will consider the induced subgraphs of $R$ having three vertices, and their configurations.

\begin{definition}
	If $H$ is a graph on three vertices, then we let $\#(G,H)$ be the number of $3$-subsets of $V(G)$ inducing a graph isomorphic to $H$.
\end{definition} 

\begin{lemma} 
	If $T$ is a cluster, then
	\[
		k_3(\Gp{T})-k_3(G)\geq t\cdot e(R)-2\#(R,\tri)-\#(R,\cher).
	\]
\end{lemma}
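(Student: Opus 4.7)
The plan is to compute $k_3(\Gp T)-k_3(G)$ almost exactly, by sorting the triangles of each graph according to how their vertices split among $T$, $S$, and $V_1:=V(G)\setminus(T\cup S)$, and then controlling a single error term.

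First I observe that in $\Gp T$ the set $T\cup S$ is a clique on $t+s=r+1$ vertices, that vertices of $T$ keep their original closed neighborhood $T\cup S$, and that $S$-vertices have all their edges to $V_1$ severed. Sweeping through the triangle types, the only ones whose counts differ between $G$ and $\Gp T$ are: type $TSS$, which gains $t\cdot e(R)$ (one new $S$-$S$ edge per edge of $R$, times $t$ choices of $T$-vertex); type $SSS$, whose count changes from $\#(R,\emp)$ to $\binom{s}{3}$, a gain of $\#(R,\angel)+\#(R,\cher)+\#(R,\tri)$; and the types $SSV_1$ and $SV_1V_1$, which are annihilated because $S$-vertices lose all edges to $V_1$. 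Writing $\alpha$ for the total number of destroyed $SSV_1$ and $SV_1V_1$ triangles, this gives the exact identity
\[
    k_3(\Gp T)-k_3(G)=t\cdot e(R)+\#(R,\angel)+\#(R,\cher)+\#(R,\tri)-\alpha.
\]
A double count of edge--triple incidences in $R$ yields $\#(R,\angel)+2\#(R,\cher)+3\#(R,\tri)=(s-2)\,e(R)$, so subtracting shows the lemma reduces to the bound $\alpha\le(s-2)\,e(R)$.

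To prove that bound I count $\alpha$ via edges. Every $SSV_1$ or $SV_1V_1$ triangle contains exactly two edges of the form $uy$ with $u\in S$ and $y\in V_1$, so
\[
    2\alpha=\sum_{\substack{uy\in E(G)\\ u\in S,\,y\in V_1}} w(uy).
\]
Lemma \ref{lem:dR} bounds the number of such edges by $\sum_{u\in S}d_R(u)=2\,e(R)$, so it suffices to prove the per-edge inequality $w(uy)\le s-2$. For this, note that $y\in V_1$ has no neighbor in $T$ (since $N[x]=T\cup S$ for each $x\in T$), so $N(u)\cap N(y)\cap T=\emptyset$; since $u\in S$ is adjacent to all of $T$, we have $|N(u)\setminus T|\le r-t=s-1$; and since $y$ itself lies in $N(u)\setminus T$ but is not in $N(y)$, we conclude $|N(u)\cap N(y)|\le s-2$. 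Plugging back in gives $2\alpha\le(s-2)\cdot 2\,e(R)$, as required.

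The main subtlety is this sharpening $w(uy)\le s-2$: the naive bound $w(uy)\le r-1=s+t-2$ would only give $\alpha\le(s+t-2)\,e(R)$, too weak by exactly the cluster size $t$. The argument leans on two structural features of a cluster --- that $T$-vertices see nothing of $V_1$, and that $S$-vertices see all of $T$ --- and would not go through without them.
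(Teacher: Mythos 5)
Your proof is correct. The exact accounting of gained triangles by triple type ($TSS$ contributes $t\cdot e(R)$, $SSS$ contributes $\#(R,\tri)+\#(R,\cher)+\#(R,\angel)$, everything else involving $T$ or $V_1$ unchanged or nonexistent) is the same as the paper's, and both arguments end up bounding the number $\alpha$ of destroyed triangles by the same quantity $\#(R,\angel)+2\#(R,\cher)+3\#(R,\tri)=(s-2)e(R)$; but the route to that bound is genuinely different. The paper splits the destroyed triangles into the $SV_1V_1$ type, bounded by $\sum_{v\in S}\binom{d_R(v)}{2}$, and the $SSV_1$ type, bounded by $\sum_{vw\notin E(R)}\min(d_R(v),d_R(w))$, and then converts each sum separately into configuration counts in $R$. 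You instead handle both types at once by summing $\w(uy)$ over the $S$--$V_1$ edges and capping each summand by $s-2$ using the two structural facts $N(y)\cap T=\emptyset$ and $T\subseteq N(u)$; this single uniform per-edge estimate, combined with one handshake identity in $R$, replaces the paper's two degree-sequence computations and is arguably cleaner, at the cost of discarding the slightly finer degree-sensitive intermediate bounds the paper obtains. One small point worth making explicit: the final step multiplies the edge count by $s-2$, so you need $s\ge 2$; this holds because $\delta(R)\ge 1$ (noted in the paper right after the definition of $R$), which forces $R$ to have at least two vertices.
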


\begin{proof}
	We start by proving that 
	\begin{align}
		k_3(\Gp{T})-k_3(G)&\geq t\cdot e(R)+\#(R,\tri)+\#(R,\cher)+\#(R,\angel)\notag\\ 
			  & \qquad \qquad \qquad - \sum_{v\in S} \binom{d(v)}2-\sum_{vw\not\in E(R)} \min(d(v),d(w)),\label{eqn:first}
	\end{align}
	where we temporarily write $d(v)$ for $d_R(v)$.  To prove (\ref{eqn:first}), we will count exactly the new triangles in $\Gp{T}$ and bound the number of triangles lost from $G$.  The triangles gained are exactly triples in $S\cup T$ containing an edge of $R$.  This number is $t\cdot e(R)+\#(R,\tri)+\#(R,\cher)+\#(R,\angel)$.  Triangles lost from $G$ contain either one vertex of $S$ or two vertices of $S$ not adjacent in $R$.  There are at most $\sum_v \binom{d(v)}2$ of the first type since such a triangle contains two $G$-neighbors of $v$ outside $S\cup T$ that are adjacent in $G$.  Triangles of the second type contain two vertices $v,w\in S$ which are adjacent in $G$ (and so nonadjacent in $R$) and a common $G$-neighbor of $v$ and $w$ outside $S\cup T$, of which there are at most $\min(d(v),d(w))$.

	We will now bound the summation terms by counting configurations of triples.  Note first that
	\begin{equation}
		\sum_{v\in S} \binom{d(v)}2 = \#(R,\cher)+3\#(R,\tri).\label{eqn:second}
	\end{equation}
	The left hand side counts the number of pairs of incident edges of $R$.  Each such pair of edges determines a unique triple of vertices.  Triples of the form $\tri$ contribute three incident pairs; triples of the form $\cher$ contribute one.  The other summation term we bound as follows.
	\begin{align}
		\sum_{vw\not\in E(R)} \min(d(v),d(w)) &\leq \frac{1}2\sum_{vw\not\in E(R)} (d(v)+d(w))\notag\\
		&=\#(R,\cher)+\#(R,\angel).\label{eqn:third}
	\end{align}
	The first bound is clear.  The second equality follows from the fact that the sum counts edge/non-edge incidences, and these incidences each determine a unique triple.  Triangles and empty triples have no such incidences.  Two-edge and one-edge triples each have two edge/non-edge incidences.  Combining (\ref{eqn:first}), (\ref{eqn:second}), and (\ref{eqn:third}), the lemma is proved.
\end{proof}

\begin{definition}
	Given a graph $G$, we let $\mu(G)=2\#(G,\tri)+\#(G,\cher)$.  We call triples inducing $\tri$, of course, \emph{triangles}, and those inducing $\cher$, \emph{cherries}.
\end{definition}

\begin{corollary}
	The net gain of triangles when folding at $T$ is at least $t\cdot e(R)-\mu(R)$.  Hence, if $t\cdot e(R)\geq \mu(R)$, then $k_3(\Gp{T})\geq k_3(G)$.
\end{corollary}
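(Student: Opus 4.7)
The proof is essentially a one-line unpacking of definitions combined with the preceding lemma. First, I would observe that by the definition $\mu(G)=2\#(G,\tri)+\#(G,\cher)$ applied to $R$, we have
\[
    \mu(R) = 2\#(R,\tri) + \#(R,\cher).
\]
Then, substituting this into the bound given by the preceding lemma, namely
\[
    k_3(\Gp{T})-k_3(G)\geq t\cdot e(R)-2\#(R,\tri)-\#(R,\cher),
\]
immediately yields
\[
    k_3(\Gp{T})-k_3(G)\geq t\cdot e(R)-\mu(R),
\]
which is the first statement of the corollary.

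For the second assertion, I would simply note that if $t\cdot e(R)\geq \mu(R)$, then the right-hand side of the inequality above is nonnegative, and hence $k_3(\Gp{T})\geq k_3(G)$. There is no real obstacle here; the content has already been done by the preceding lemma, and the corollary is purely a repackaging in terms of the newly introduced parameter $\mu(R)$, which will presumably serve as the convenient handle for the discharging arguments in Section~\ref{sub:discharging} and Section~\ref{sub:mu}.
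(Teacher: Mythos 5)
Your proposal is correct and matches the paper's (implicit) reasoning exactly: the corollary is an immediate substitution of the definition $\mu(R)=2\#(R,\tri)+\#(R,\cher)$ into the bound of the preceding lemma, which is why the paper states it without further proof. Nothing is missing.
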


\begin{definition}
	We say that $T$ is \emph{foldable} if $t\cdot e(R)\geq \mu(R)$.
\end{definition}


\subsection{Extremal results for $\mu(G)$} 
\label{sub:mu}

In this section, we answer two natural extremal questions about the parameter $\mu$ which will be useful in the proof of the Main Theorem.  Firstly, we prove that given $n$ and $m$, the maximum value of $\mu$ for graphs with $n$ vertices and $m$ edges is attained by the lex graph.  Secondly, we prove that the maximum value of $\mu$ among graphs with $n$ vertices, $m<n-1$ edges, and minimum degree at least one is attained by the union of a star of maximum size and a collection of disjoint edges.

\begin{definition}
	The \emph{lex graph} on $n$ vertices and $m$ edges, denoted $L(n,m)$, is the graph with vertex set $[n]$ and edge set consisting of the initial segment of size $m$ according to the lex order on $\binom{[n]}2$.  The \emph{lex order} on $\mathcal{P}([n])$ is defined by $A<B$ if $\min(A\symd B)\in A$.  Thus, the first few edges in lex order are
	\[
		\set{1,2},\set{1,3},\set{1,4},\ldots,\set{1,n},\set{2,3},\set{2,4},\ldots,\set{2,n},\set{3,4},\ldots.
	\]
\end{definition}

We will adopt a standard approach to show that the lex graph is extremal.  We first prove that $\mu$ is maximized by some threshold graph, and then show that among threshold graphs, lex is best.  There are many equivalent definitions of threshold graphs (see, e.g., \cite{MP}) but in this paper, we find the following most convenient.

\begin{definition}
	A \emph{threshold graph} is a graph that is constructed inductively from $K_1$ by successively adding either an isolated vertex or a dominating vertex.  We think of the vertex set of a threshold graph on $n$ vertices as $[n]$.  We label the vertices ``backwards'', so that vertex $n$ is the initial $K_1$, vertex $n-1$ is the first vertex added, and so on.  The \emph{code} of a threshold graph with $n$ vertices is the binary sequence $\s$ of length $n-1$ such that $\s_i=1$ if the vertex $i$ was a dominating vertex when added and $\s_i=0$ if it was an isolate.  For a binary sequence $\s\in \set{0,1}^{n-1}$, we let $T(\s)$ denote the threshold graph on $[n]$ with code $\s$.  Note that (vertices that correspond to) $1$s are adjacent everything to their right and $1$s to their left, while $0$s are adjacent only to $1$s to their left.
	
	We will write an expression such as $1^a0^b1^c\ldots$ for the sequence consisting of $a$ $1$s, $b$ $0$s, $c$ $1$s, etc.  Also, given sequences $\s$ and $\tau$, we write $\s.\tau$ for their concatenation.  Thus, for instance, $T(1^40^5)$ is the join\footnote{We define the \emph{join} of graphs $G$ and $H$ with disjoint vertex sets, denoted $G\vee H$, to be the graph with vertex set $V(G)\cup V(H)$ and edge set $E(G)\cup E(H)\cup \setof{xy}{x\in V(G), y\in V(H)}$.} of $K_4$ and  $E_6$, whereas $T(0^61^3)=K_4\cup E_6$.  For a binary sequence $\s$, we let $\abs{\s}$ be its length.  Also, we let $\absz{\s}$ and $\abso{\s}$ be the number of $0$s and $1$s, respectively, in $\s$.
\end{definition}

The lex graph $L(n,m)$ is a threshold graph.  Its code is of the form $1^a0^b1^x0^c$ for some $a,b,c\geq 0$ and $x\in \set{0,1}$.  As $m$ increases, $1$s march across from the right, accumulating at the left.

Our approach to proving that $\mu$ is maximized by some threshold graph is to use a compression that makes a graph ``more threshold".  We will show that this compression increases the value of $\mu$.  This implies, as in \cite{CR2}, that some threshold graph maximizes $\mu$.  Let $G$ be any graph, and let $x$ and $y$ be distinct vertices in $G$. The choice of $x$ and $y$ defines a natural partition of $V(G\setminus \set{x,y})$ into four parts: vertices that are adjacent only to $x$, vertices adjacent only to $y$, vertices adjacent to both, and vertices adjacent to neither.  We write
\begin{align*}
	\ax &=\setof{v\in V(G\setminus \set{x,y})}{v\adj x, v\nadj y},\\
	\axy&=\setof{v\in V(G\setminus \set{x,y})}{v\adj x, v\adj y},\\
	\ay &=\setof{v\in V(G\setminus \set{x,y})}{v\nadj x, v\adj y}, \text{ and}\\
	\anxy&=\setof{v\in V(G\setminus \set{x,y})}{v\nadj x, v\nadj y}.
\end{align*} 
\begin{definition} The
\emph{compression of $G$ from $x$ to $y$}, denoted
$\Gxy$, is the graph obtained from $G$ by deleting all edges between $x$ and $\ax$ and adding all
edges from $y$ to $\ax$. 
\end{definition}

The following lemma from \cite{CR2} summarizes a useful property of the compression operator.  When we use the lemma, $\G$ will be the set of graphs with $n$ vertices and $m$ edges having $\mu(G)$ maximum.

\begin{lemma}[Cutler, Radcliffe \cite{CR2}]\label{lem:max}
	Suppose that $\G$ is a family of graphs on a fixed vertex set $V$.  If $\G$ is closed under all compressions, i.e., for any $G'\in \G$ and any $x,y\in V$ we also have $\Gxy'\in \G$, then $\G$ contains a threshold graph.
\end{lemma}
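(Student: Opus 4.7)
The plan is to show that starting from any $G\in\G$, a finite sequence of compressions (all remaining in $\G$ by closure) terminates in a threshold graph. The argument rests on a potential function that strictly increases under each nontrivial compression and on identifying the terminal graphs as precisely threshold graphs.

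First I would fix an arbitrary ordering $v_1,v_2,\ldots,v_n$ of $V$ and set $\Phi(G)=\sum_{i=1}^n (n-i)\,d_G(v_i)$. The key observation is that for $i<j$ the compression $G_{v_j\to v_i}$ simply transfers $k:=\abs{A_{v_j\bar{v_i}}}$ edges from $v_j$ to $v_i$, so
\[
    \Phi(G_{v_j\to v_i}) - \Phi(G) = k(j-i),
\]
which is strictly positive whenever $k>0$. Since $\Phi$ takes only finitely many values on graphs on $V$, any sequence of such nontrivial compressions terminates.

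Starting from $G^{(0)}:=G$, at each step I pick any $i<j$ with $A_{v_j\bar{v_i}}\ne\emptyset$ in the current graph and replace $G^{(m)}$ by $(G^{(m)})_{v_j\to v_i}$. Closure of $\G$ keeps every graph in the sequence inside $\G$, and the monovariant above forces termination at some $G^*\in\G$ satisfying $A_{v_j\bar{v_i}}=\emptyset$ for all $i<j$. Unpacking, this is exactly the directional nesting $N(v_j)\setminus\{v_i\}\of N(v_i)\setminus\{v_j\}$ for every $i<j$.

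Finally I would verify that this nested-neighborhood condition implies $G^*$ is threshold in the inductive sense used in the paper, via induction on $n$. If in $G^*$ neither $v_1$ is a dominating vertex nor $v_n$ is isolated, pick $p>1$ with $v_1\nadj v_p$ and $q<n$ with $v_n\adj v_q$; applying the nesting to the pairs $(p,n)$ and $(1,q)$ (when $p\ne q$), or to $(1,n)$ (when $p=q$), quickly forces $v_p\in N(v_1)$, a contradiction. So one can peel off either $v_1$ as a dominating vertex (added last) or $v_n$ as an isolated vertex, and the inductive hypothesis on the remaining graph finishes. The main, mild obstacle is precisely this peeling step; everything else is a clean monovariant calculation, with the choice of compression direction $v_j\to v_i$ ($i<j$) being exactly what turns $\Phi$ into a strict monovariant.
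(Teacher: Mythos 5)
Your proof is correct; note that the paper does not actually prove this lemma (it is imported from \cite{CR2}), and your argument --- the degree-weighted potential $\Phi$ strictly increasing under each nontrivial compression $v_j\to v_i$ with $i<j$, forcing termination at a graph with linearly nested neighborhoods, which is then shown threshold by peeling a dominating $v_1$ or isolated $v_n$ --- is the standard route. The one spot you gloss is the peeling step: the degenerate subcases $q=1$ and $p=n$ are not covered by the pairs $(p,n)$ and $(1,q)$ as stated and need the nesting applied to $(1,n)$ first to obtain $v_1\adj v_n$, but every subcase does check out, so this is a complete and correct proof modulo that routine bookkeeping.
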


We now show that compression does not decrease $\mu$.

\begin{lemma}\label{lem:mu}
	If $G$ is a graph with distinct vertices $x$ and $y$, then $\mu(G)\leq \mu(\Gxy)$.
\end{lemma}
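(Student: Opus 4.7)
The plan is to reformulate $\mu$ so that its change under compression decouples cleanly. Double-counting pairs of edges that share a vertex gives $\sum_v \binom{d_G(v)}{2} = 3\#(G,\tri) + \#(G,\cher)$, so that
\[
    \mu(G) = 2\#(G,\tri) + \#(G,\cher) = \sum_v \binom{d_G(v)}{2} - \#(G,\tri).
\]
It therefore suffices to compare the degree-sum term and the triangle count separately on $G$ and $\Gxy$.

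For the degree-sum term, note that compression changes only the degrees at $x$ and $y$. Writing $a=\abs{\ax}$, $b=\abs{\axy}$, $c=\abs{\ay}$, and $\epsilon\in\set{0,1}$ according to whether $xy\in E(G)$, we have $d_G(x)=a+b+\epsilon$, $d_G(y)=b+c+\epsilon$, $d_{\Gxy}(x)=b+\epsilon$, and $d_{\Gxy}(y)=a+b+c+\epsilon$. A short calculation (applying $\binom{n+k}{2}-\binom{n}{2}=nk+\binom{k}{2}$ at each endpoint and cancelling) gives
\[
    \sum_v\binom{d_{\Gxy}(v)}{2} - \sum_v\binom{d_G(v)}{2} = ac.
\]

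For the triangle term, note that compression modifies only edges incident to $x$ or $y$, so triangles disjoint from $\set{x,y}$ are unchanged and triangles through the edge $xy$ (when present) correspond in both graphs to vertices of $\axy$. Triangles through $x$ but not $y$ correspond, in $G$, to edges of $G[\ax\cup\axy]$ and, in $\Gxy$, to edges of $G[\axy]$, since $N_{\Gxy}(x)\wo\set{y}=\axy$ and compression does not touch edges among $V(G)\wo\set{x,y}$; similarly, triangles through $y$ but not $x$ correspond to edges of $G[\axy\cup\ay]$ in $G$ and of $G[\ax\cup\axy\cup\ay]$ in $\Gxy$. After cancellation one finds
\[
    \#(\Gxy,\tri) - \#(G,\tri) = e_G(\ax,\ay) \leq ac,
\]
using the trivial bound that the number of edges between two sets of sizes $a$ and $c$ is at most $ac$.

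Combining the two pieces, $\mu(\Gxy) - \mu(G) = ac - e_G(\ax,\ay) \geq 0$, which proves the lemma. The main thing to get right is not any particular inequality but the bookkeeping of how the triangle count changes; once the four-part partition $\ax,\axy,\ay,\anxy$ of $V(G)\wo\set{x,y}$ is in place and one uses that compression leaves all edges among $V(G)\wo\set{x,y}$ untouched, the accounting is routine.
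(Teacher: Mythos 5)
Your proof is correct, and it takes a genuinely different route from the paper's. The paper proves the lemma by classifying the pairs of vertices that can participate in a changed triple (eight types of pairs, depending on which of $\ax$, $\axy$, $\ay$, $\anxy$ they meet) and tabulating the contribution of each type to $\mu(G)$ and $\mu(\Gxy)$, checking that nothing is lost case by case. You instead use the identity $\sum_v\binom{d(v)}{2}=3\#(G,\tri)+\#(G,\cher)$ (which, incidentally, the paper itself uses elsewhere, in the folding lemma) to rewrite $\mu(G)=\sum_v\binom{d_G(v)}{2}-\#(G,\tri)$, and then track the two terms separately: the degree-sum changes only at $x$ and $y$ and increases by exactly $ac$, while the triangle count increases by exactly $e_G(\ax,\ay)\le ac$. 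Both of your intermediate computations check out (in particular, the triangle bookkeeping through $x$ alone and through $y$ alone correctly cancels the $e(G[\ax])$ and $e_G(\ax,\axy)$ terms). Your approach buys a shorter argument with a single localized inequality, and as a bonus it gives the exact value $\mu(\Gxy)-\mu(G)=ac-e_G(\ax,\ay)$, hence a clean characterization of equality ($\ax$ completely joined to $\ay$), which the paper's table makes harder to extract. The paper's case analysis, on the other hand, is more mechanical and would adapt directly to tracking other weighted configuration counts where no such algebraic reformulation is available.
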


\begin{proof}
	We will compute $\mu(\Gxy)-\mu(G)$ by counting triangles and cherries that appear in only one of the two graphs.  Such triples must consist of a pair with at least one vertex from $\ax$ together with $x$ or $y$.  Figure~\ref{fig:bigguy} shows almost all of the types of pairs we need to consider, and Table~\ref{tab:bigguy} shows the contribution of each kind of pair to both $\mu(G)$ and $\mu(\Gxy)$.
	\begin{figure}[ht]
        \newcommand{\Axy}{A_{xy}}
        \newcommand{\Ax}{A_{x\bar{y}}}
        \newcommand{\Ay}{A_{\bar{x}y}}
        \newcommand{\Anxy}{A_{\overline{xy}}}
        \begin{center}
            \begin{tikzpicture}
                \def\xx{1}  \def\xy{8}
                \def\yx{7}  \def\yy{\xy}
        
                \def\axyx{4}  \def\axyy{\xy}
                \def\axx{\xx} \def\axy{4}
                \def\ayx{\yx} \def\ayy{\axy}
                \def\anxyx{\axyx} \def\anxyy{2}
        
                \def\eone{1} \def\etwo{2}
                \def\horiz{2 and 1} \def\vertic{1 and 2}
        
                \pgfmathsetmacro{\qx}{\axyx+\eone^2/(\yx-\axyx)}
                \pgfmathsetmacro{\qy}{\axyy+\etwo*sqrt(1-\eone^2/(\yx-\axyx)^2)}
                \pgfmathsetmacro{\tx}{\ayx-\etwo*sqrt(1-\eone^2/(\yy-\ayy)^2)}
                \pgfmathsetmacro{\ty}{\ayy+\eone^2/(\yy-\ayy)}
        
                \draw (\axyx,\axyy) ellipse (1 and 2) node (axy) {$\Axy$};
                \draw (\axx,\axy) ellipse (2 and 1) node (ax) at (1,3.5) {$\Ax$};
                \draw (\ayx,\ayy) ellipse (2 and 1) node (ay) at (7,3.5) {$\Ay$};
                \draw (\anxyx,\anxyy) ellipse (2 and 1) node (anxy) {$\Anxy$};
        
                \path coordinate (x) at (\xx,\xy)
                      coordinate (y) at (\yx,\yy)
                      coordinate (P) at (2*\axyx-\qx,\qy)
                      coordinate (Q) at (\qx,\qy)
                      coordinate (R) at (2*\axyx-\qx,2*\axyy-\qy)
                      coordinate (S) at (\qx,2*\axyy-\qy)
                      coordinate (T) at (\tx,\ty)
                      coordinate (U) at (2*\ayx-\tx,\ty);
        
        
                \draw (x)--(P)
                      (x)--(R)
                      (y)--(Q)
                      (y)--(S)
                      (y)--(T)
                      (y)--(U);
        
                \node[vx, label=left:$x$] at (x) {};
                \node[vx, label=right:$y$] at (y) {};
        
                \path (-0.75,4.2) node[vx] (1a) {}
                           +(1,0) node[vx] (1b) {}
                      (-0.75,3.8) node[vx] (2a) {}
                           +(1,0) node[vx] (2b) {}
                        (1.7,4.8) node[vx] (3a) {}
                       +(-30:0.4) node[vx] (4a) {}
                        (3.8,6.5) node[vx] (4b) {}
                       +(150:0.4) node[vx] (3b) {}
                        (1.7,3.2) node[vx] (6a) {}
                        +(30:0.4) node[vx] (5a) {}
                        (2.6,2.5) node[vx] (6b) {}
                        +(30:0.4) node[vx] (5b) {}
                       (2.75,4.2) node[vx] (7a) {}
                         +(2.5,0) node[vx] (7b) {}
                       (2.75,3.8) node[vx] (8a) {}
                         +(2.5,0) node[vx] (8b) {};
                 
                \draw (1a)-- node[above] {$1$} (1b)
                      (3a)-- node[above left] {$3$} (3b)
                      (5a)-- node[above right] {$5$} (5b)
                      (7a)-- node[above] {$7$} (7b);
              
                \draw[dashed] (2a)-- node[below] {$2$} (2b)
                              (4a)-- node[below right] {$4$} (4b)
                              (6a)-- node[below left] {$6$} (6b)
                              (8a)-- node[below] {$8$} (8b);

                \end{tikzpicture}
        \end{center}
		\caption{Types of pairs involved in changing triples under compression.}\label{fig:bigguy}
	\end{figure}
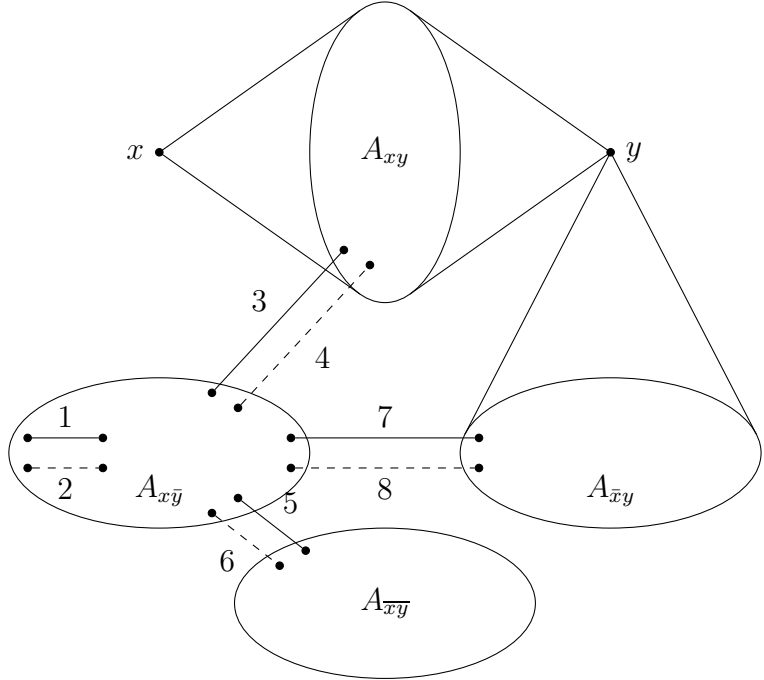
In Table~\ref{tab:bigguy}, we use $x$-$\tri$ to denote a pair that forms a triangle with $x$, and so on.
	\begin{table}
		\begin{center}
		\begin{tabular}{ccc}
			Type of edge & Configurations in $G$ & Configurations in $\Gxy$\\ \hline
			$1$ & $x$-$\tri$ & $y$-$\tri$\\ 
			$2$ & $x$-$\cher$ & $y$-$\cher$\\
			$3$ & $x$-$\tri$ & $y$-$\tri$\\
			& $y$-$\cher$ & $x$-$\cher$\\
			$4$ & $x$-$\cher$ & $y$-$\cher$\\
			$5$ & $x$-$\cher$ & $y$-$\cher$\\
			$6$ & --- & ---\\
			$7$ & $x$-$\cher$ & $y$-$\tri$\\
			& $y$-$\cher$ & \\
			$8$ & --- & $y$-$\cher$
		\end{tabular}
		\end{center}
		\caption{Contributions to $\mu$ in $G$ and $\Gxy$.}\label{tab:bigguy}
	\end{table}
	Note that edges of type $7$ contribute to two cherries in $G$ and one triangle in $\Gxy$, but these combinations make the same contribution to $\mu$.  The only type of triple not covered in Table~\ref{tab:bigguy} is one of the form $axy$ with $a\in \ax$.  If $x\nadj y$, then this triple  contributes neither to $\mu(G)$ nor to $\mu(\Gxy)$.  If $x\adj y$, the triple $axy$ is a cherry in both $G$ and $\Gxy$, though with different edges.
\end{proof}

\begin{theorem}\label{thm:lex}
	If $G$ is a graph with $n$ vertices and $m$ edges, then $\mu(G)\leq \mu(L(n,m))$.
\end{theorem}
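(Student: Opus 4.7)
The plan is to prove Theorem~\ref{thm:lex} in two stages. First, by Lemmas~\ref{lem:max} and \ref{lem:mu}, together with the observation that compression preserves $|E(G)|$ (it replaces each edge $\{x, a\}$ for $a \in \ax$ by the edge $\{y, a\}$ bijectively, and does not alter any other edge), the family of $n$-vertex $m$-edge graphs achieving $\max \mu$ is closed under compression and therefore contains a threshold graph. It thus suffices to prove the theorem among threshold graphs.

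Second, I induct on $n$. For any code $\sigma \in \set{0,1}^{n-1}$, the ``last added'' vertex (vertex $1$ in the paper's labeling) is isolated if $\sigma_1 = 0$ and dominating if $\sigma_1 = 1$. In the first case $T(\sigma) = T(\sigma') \cup K_1$ with $\sigma' = \sigma_2 \ldots \sigma_{n-1}$ a code on $n-1$ vertices with $m$ edges, and $\mu$ is unchanged (an isolated vertex creates no new triples of the relevant types). In the second case $T(\sigma) = T(\sigma') \vee K_1$ with $\sigma'$ of length $n-2$ having $m - (n-1)$ edges; enumerating triples through the new dominating vertex gives
\[
    \mu(T(\sigma)) = \mu(T(\sigma')) + (m - (n-1)) + \binom{n-1}{2},
\]
since each edge of $T(\sigma')$ contributes a triangle and each non-edge a cherry centered at the new vertex. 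Applying the inductive hypothesis to $T(\sigma')$,
\[
    \max_\sigma \mu(T(\sigma)) = \max\!\left\{\mu(L(n-1, m)),\ \mu(L(n-1, m-(n-1))) + (m - (n-1)) + \binom{n-1}{2}\right\},
\]
with feasibility conditions $m \le \binom{n-1}{2}$ and $m \ge n-1$ respectively.

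To identify this maximum with $\mu(L(n,m))$, I use the isomorphisms $L(n,m) \cong L(n-1,m) \cup K_1$ for $m < n-1$ (then vertex $n$ is isolated in $L(n,m)$) and $L(n,m) \cong K_1 \vee L(n-1, m-(n-1))$ for $m \ge n-1$ (then vertex $1$ is dominating). These resolve the ranges $m < n-1$ and $m > \binom{n-1}{2}$ at once. The intermediate range $n-1 \le m \le \binom{n-1}{2}$ is the main obstacle: both arguments of the max are feasible, and I must establish
\[
    \mu(L(n-1, m-(n-1))) + (m-(n-1)) + \binom{n-1}{2} \ge \mu(L(n-1, m)).
\]
The cleanest route is to derive the closed form $\mu(L(n,m)) = a\binom{n-1}{2} - \binom{a}{3} - \binom{a}{2} + \binom{a+r}{2}$ (where $m = an - \binom{a+1}{2} + r$ with $0 \le r \le n-a-1$) from a direct count of degrees and triangles in the structural decomposition $L(n,m) \cong K_a \vee (K_{1,r} \cup E_{n-a-1-r})$, and then verify the inequality by a short algebraic case analysis depending on whether the analogous parameter $a'$ for $L(n-1,m)$ equals $a$ or $a+1$.
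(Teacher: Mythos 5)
Your first stage coincides with the paper's: Lemmas~\ref{lem:max} and~\ref{lem:mu} (plus the trivial fact that compression preserves the edge count) reduce the problem to threshold graphs. Your second stage is genuinely different. The paper stays inside the class of threshold codes and shows that any code containing a rise $01$ followed by a disjoint fall $10$ (or by a final $1$) can be swapped to $10\ldots01$ with a strict gain $\absz{\psi}+1>0$, and that the pattern-free codes are exactly the lex codes. You instead peel off the last-added vertex and induct on $n$. Your recursion is correct, your closed form for $\mu(L(n,m))$ checks out (it equals $2\binom{a}{3}+2\binom{a}{2}(n-a)+a\binom{n-a}{2}+ar+\binom{r}{2}$, which matches your expression via $\binom{a+r}{2}=\binom{a}{2}+ar+\binom{r}{2}$ and the identity $a\binom{n-1}{2}=3\binom{a}{3}+2\binom{a}{2}(n-a)+a\binom{n-a}{2}$), and you correctly isolate the bottleneck: the inequality $\mu(L(n-1,m))\le\mu(L(n,m))$ for $n-1\le m\le\binom{n-1}{2}$. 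The paper's swap buys a uniform one-line computation with no arithmetic on lex parameters; your route buys an explicit formula for the extremal value, which the paper never writes down.

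The gap is that this crucial inequality is asserted rather than proven, and the case split you propose is too coarse. The parameter $a'$ for $L(n-1,m)$ need not lie in $\{a,a+1\}$: for example $L(7,14)=K_2\vee(K_{1,3}\cup E_1)$ has $a=2$ and $\mu=39$, while $L(6,14)=K_4\vee E_2$ has $a'=4$ and $\mu=36$; more generally $a'=a+2$ can occur in the dense regime $n\le 2a+4$. So the verification needs a third case (or a representation-independent argument). The inequality itself is true and elementary from your closed form, so the gap is fillable, but note also that what you are trying to prove by hand is exactly the paper's swap applied to the code $0.1^{a'}0^{b}1^{x}0^{c}$ of $L(n-1,m)\cup K_1$, which for $m\ge n-1$ always begins with a rise followed by a fall or a final $1$; so the local-swap computation is hard to avoid entirely, and in the intermediate range your induction ends up reproving a special case of it. As written, your proposal is a correct and genuinely different skeleton with one load-bearing inequality left unverified.
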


\begin{proof}
	Let
	\[
		\G = \setof{G'}{n(G')=n, e(G')=m, \text{$\mu(G')$ is maximum}}.
	\]
	By Lemma~\ref{lem:max}, $\G$ contains a threshold graph and so we may assume that $G\in \G$ and is threshold.  Say $G=T(\s)$.  We will show that $\s$ cannot be of the form $\phi.01.\psi.10.\rho$ (even if one or all of $\phi$, $\psi$, or $\rho$ are empty) nor $\phi.01.\psi.1$.  If so, we will compare $G$ to $G'=T(\s')$ where, in the first case, $\s'=\phi.10.\psi.01.\rho$, and in the second, $\s'=\phi.10.\psi.0$.  The second case is that where the second $01$ extends past the end of the code to involve vertex $n$.  Our proof works the same in both cases with $\abs{\rho}=0$ in the second case.  It is easy to check that $e(G')=e(G)$, and we will show that $\mu(G')> \mu(G)$.  We will then show that the only threshold graphs whose codes do not contain these patterns are lex graphs.
	
	Our first job is to show $\mu(G')\geq \mu(G)$.  Many of the triples that contribute to $\mu(G)$ and $\mu(G')$ effectively stay the same between the two graphs.  In $\s$, the $01$ is in positions $a$ and $a+1$ where $a=\abs{\phi}+1$ and the $10$ is in positions $b$ and $b+1$ where $b=\abs{\phi}+\abs{\psi}+3$.  If we define the involution $f=(a\ a+1)(b\ b+1)$, the product of two transpositions, then typically a triple $Q$ is a triangle or cherry in $G$ exactly if $f(Q)$ is a triangle or cherry in $G'$.  We compute the difference $\mu(G')-\mu(G)$ by enumerating the exceptions.  In fact, we have $G'=f(G)+\set{a,a+1}-\set{b,b+1}$.  Thus all triangles or cherries gained or lost must contain either $\set{a,a+1}$ or $\set{b,b+1}$.  Considering triangles first, there are triangles in $G$ of the form $Q=\set{x,b,b+1}$ where $\s_x=1$ and $x<b$ for which $f(Q)$ is not a triangle in $G'$.  Conversely, there are triangles $Q'=\set{y,a,a+1}$ where $\s'_y=1$ and $y<a$ for which $f(Q')$ is not a triangle in $G$.  Hence, the contribution of triangles to $\mu(G')-\mu(G)$ is
	\[
		-2(\abso{\phi}+1+\abso{\psi}-\abso{\phi})=-2(1+\abso{\psi}).
	\]
	As for cherries, $G$ has cherries of the form $R=\set{b,b+1,x}$ where $x>b+1$ and also cherries of the form $R=\set{y,a,a+1}$ where $\s_y=1$ and $y<a$, for which $f(R)$ is not a cherry in $G'$.  Conversely, there are cherries in $G'$ of the form $\set{y,b,b+1}$ with $\s'_y=1$ and $y<b$ and $\set{a,a+1,x}$ with $x>a+1$ whose images under $f$ are not cherries in $G$.  Thus, the contribution to $\mu(G')-\mu(G)$ is 
	\[
		(\abs{\psi}+2+\abs{\rho})-\abs{\rho}+(\abso{\phi}+1+\abso{\psi})
		-\abso{\phi}=\abs{\psi}+\abso{\psi}+3.
	\]
	Combining the two contributions, we see that
	\[
		\mu(G')-\mu(G)=\abs{\psi}+\abso{\psi}+3-2(1+\abso{\psi})
		=\absz{\psi}+1>0.
	\]
	
	It only remains to show that if $\s$ is not of either of the forms above, then $T(\s)$ is the lex graph.  We will call a $01$ substring a \emph{rise}, and a $10$ substring a \emph{fall}. We have shown that $\s$ does not have a rise followed by a (disjoint) fall, nor does it have a rise followed by a final $1$. We wish to show that $\s$ is the code of a lex graph.  Suppose first that $\s$ has no rises; in that case $\s$ is of the form $1^a0^b$ for some non-negative $a,b$ and hence $G$ is a lex graph. Otherwise we look for the first rise, and split $\s$ there: $\s=\phi'.01.\psi'=\phi.\psi$, where $\phi=\phi'.0$ and $\psi=1.\psi'$. Since $\phi$ does not contain a rise it is of the form $1^a0^b$ with $a\ge 0, b\ge 1$. Also, by the condition on $\s$ we know that $\psi'$ does not contain a fall, nor does it end with a $1$.  This implies that $\psi=0^c$ for some $c\ge 0$. Hence $\s=1^a0^b10^c$ is the code for a lex graph.
\end{proof}

In our application to our proof of the Main Theorem, the graphs for which we want to bound $\mu$ are required to have minimum degree at least one.

\begin{theorem}\label{thm:q2}
	If $G$ is a graph with $n$ vertices, $m<n-1$ edges, and minimum degree at least one (so that $m\geq n/2$), then $\mu(G)\leq \mu(K_{1,p}\cup qK_2)$ where $p=2m-n+1$ and $q=n-m-1$.
\end{theorem}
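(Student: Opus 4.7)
The plan is to reduce the problem to a one-line convexity bound on degree sequences. The key identity is
\[
    \sum_{v \in V(G)} \binom{d(v)}{2} = 3\,\#(G, \tri) + \#(G, \cher),
\]
which is exactly equation~(\ref{eqn:second}) applied to $G$ itself: a pair of edges incident at a common vertex $v$ determines a triple $\{a, v, b\}$, with triangles contributing three such incidences (one per vertex) and cherries contributing exactly one (at the middle). Rearranging gives
\[
    \mu(G) = \sum_v \binom{d(v)}{2} - \#(G, \tri) \le \sum_v \binom{d(v)}{2},
\]
so it suffices to bound $\sum_v \binom{d(v)}{2}$ over integer sequences with $d(v) \ge 1$ and $\sum_v d(v) = 2m$.

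Since $\binom{\cdot}{2}$ is strictly convex, the sum is maximized at the most unequal admissible sequence. Concretely, if the sorted sequence $d_1 \ge \cdots \ge d_n \ge 1$ has $d_i \ge 2$ for some $i \ge 2$, then replacing $(d_1, d_i)$ by $(d_1 + 1, d_i - 1)$ strictly increases $\sum_j \binom{d_j}{2}$ by $d_1 - d_i + 1 \ge 1$. Iterating drives the sequence to $(2m - n + 1, 1, 1, \ldots, 1) = (p, 1, \ldots, 1)$, which yields $\sum_v \binom{d(v)}{2} \le \binom{p}{2}$. Finally, $K_{1,p} \cup qK_2$ realizes this degree sequence (indeed $1 + p + 2q = n$ and $p + q = m$) and is a forest, hence triangle-free, so $\mu(K_{1,p} \cup qK_2) = \binom{p}{2}$ and the bound is attained.

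There is no real obstacle here: the argument is essentially one line once one records the degree-sum identity for $\mu$. It is worth noting that Theorem~\ref{thm:lex} alone would give the \emph{larger} upper bound $\mu(G) \le \mu(L(n,m)) = \binom{m}{2}$, but the lex graph $L(n,m) = K_{1,m} \cup (n-m-1)K_1$ violates the hypothesis $\delta(G) \ge 1$. The minimum-degree constraint strictly lowers the true maximum, and the convexity bound above captures that loss exactly.
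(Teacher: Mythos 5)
Your proof is correct, and it takes a genuinely different and more elementary route than the paper's. The paper deduces the result from Theorem~\ref{thm:lex}: it applies the lex bound componentwise, rules out non-tree components by merging any such component with a tree component into a single connected lex graph, and then merges the remaining stars pairwise using $\binom{k}{2}+\binom{\ell}{2}<\binom{k+\ell-1}{2}$. You bypass the lex/threshold/compression machinery entirely: the incidence identity $\sum_v\binom{d(v)}{2}=3\#(G,\tri)+\#(G,\cher)$ gives $\mu(G)=\sum_v\binom{d(v)}{2}-\#(G,\tri)\le\sum_v\binom{d(v)}{2}$, and a convexity shift over integer sequences with entries at least $1$ summing to $2m$ bounds this by $\binom{p}{2}$, which equals $\mu(K_{1,p}\cup qK_2)$ because that graph is triangle-free. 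Two points are worth making explicit in a write-up: (i) your shifting step leaves the class of graphical degree sequences, but that is harmless since you only need an upper bound over the larger class of all integer sequences satisfying the constraints; (ii) the slack you discard, namely $\#(G,\tri)$, vanishes on the extremal graph, which is exactly why this seemingly crude bound is tight. The trade-off is that the paper's route reuses Theorem~\ref{thm:lex}, which it proves anyway, whereas your argument shows that Theorem~\ref{thm:q2} needs none of that apparatus.
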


\begin{proof}
	We may suppose that $G$ is a graph satisfying these conditions maximizing $\mu$ and having the least number of components.  If $G$ has components $C_1,C_2,\ldots,C_k$, then $\mu(G)=\sum_{i=1}^k \mu(C_i)$.  Thus, by Theorem~\ref{thm:lex}, we may assume that each component is a lex graph.  Our first aim is to prove that we may suppose that every component is a star.  It cannot be that each component is a non-tree, for then $G$ would have at least $n$ edges.  Suppose, then, that $C_1$ is a non-tree component and $C_2$ is a tree component (and so therefore a star).  We have $e(C_1)+e(C_2)\geq n(C_1)+n(C_2)-1$, so we have enough edges to build a connected lex graph $L$ on $V(C_1)\cup V(C_2)$.  Thus, $\mu(L)\geq \mu(C_1)+\mu(C_2)$ and we have decreased the number of components, a contradiction.  
	
	Lastly, we will show that all but at most one component of $G$ is an edge.  Suppose $k,\ell\geq 2$ and $G$ contains components $K_{1,k}$ and $K_{1,\ell}$.  We can replace these with $K_{1,k+\ell-1}\cup K_2$ and strictly increase $\mu$, since
	\[
		\binom{k}2+\binom{\ell}2<\binom{k+\ell-1}2.
	\]
	This contradiction finishes the proof.
\end{proof}



\subsection{Discharging} 
\label{sub:discharging}

In the proof of the Main Theorem, one case occurs when the average weight of all edges of $G$ is at most $r-2$.  In this section, we give a technique to bound the weight of edges around a cluster.  The only edges having weight $r-1$ are, of course, in clusters.  It is convenient to let the \emph{benefit} of an edge be defined by $\wbar(xy)=r-2-\w(xy)$.  An edge joining a cluster $T$ to $S=S_T$ will have benefit $d_R(v)-1$.  Note that if $e$ is a tight edge, then $\wbar(e)=-1$ and all other edges have $\wbar(e)$ nonnegative.  

  Such an edge may be incident with two clusters, so we will use the benefit of these edges by transferring $\frac{1}2(d_R(v)-1)$ from such an edge to the edges in $T$.  This ``discharging'' will sometimes be enough to show that average weight of an edge in $G$ is at most $r-2$, i.e., that the average benefit is nonnegative.  This is sufficient to show that $k_3(G)\leq k_3(aK_{r+1}\cup K_b)$, as we prove in the final section.

\begin{definition}
	A cluster $T$ is \emph{dischargeable} if the associated graph $R$ satisfies
	\[
		\sum_{v\in S} (d_R(v)-1)\geq t-1,
	\]
	i.e., if $2e(R)\geq s+t-1$.
\end{definition}

\begin{lemma}\label{lem:discharge}
	If every cluster of $G$ is dischargeable, then the average weight of all edges is at most $r-2$.
\end{lemma}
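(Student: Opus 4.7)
The plan is to recast the conclusion as the statement that $\sum_{e\in E(G)}\wbar(e)\ge 0$, which is equivalent to the average weight being at most $r-2$ since $\wbar(e)=r-2-\w(e)$. I would verify this inequality using the discharging scheme sketched in the preamble, splitting the total cluster-by-cluster.

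The first step is to classify the edges. Tight edges (those lying in some cluster) carry benefit $-1$; every other edge has $\w(e)\le r-2$ and hence non-negative benefit. The crucial non-tight edges are the \emph{boundary} edges of clusters, i.e.\ those in $[T,S_T]$ for some cluster $T$. Using $t+s=r+1$ (which holds because each $x\in T$ has $N[x]=T\cup S_T$ and $d(x)=r$) together with $\w(vx)=(t-1)+(s-1-d_R(v))$ for $v\in S_T$, $x\in T$, I compute $\wbar(vx)=d_R(v)-1\ge 0$, the inequality holding because $\delta(R)\ge 1$ by maximality of the cluster.

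Next is the discharging itself: for each cluster $T$ and each boundary edge $vx\in[T,S_T]$ with $v\in S_T$, transfer $\tfrac12(d_R(v)-1)=\tfrac12\wbar(vx)$ from the edge $vx$ into the pool of tight edges of $T$. The combinatorial point that makes this legal is that clusters are vertex-disjoint, so a given edge can belong to $[T,S_T]$ for at most two distinct clusters $T$; if it does belong to two, its endpoints lie on opposite sides (one is the $T$-vertex of one cluster and the $S$-vertex of the other). Thus each boundary edge surrenders at most its full benefit and retains non-negative charge.

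The final step is the cluster accounting. The $\binom{t}{2}$ tight edges of $T$ start with total charge $-\binom{t}{2}$ and receive
\[
    \sum_{x\in T}\sum_{v\in S_T}\tfrac12(d_R(v)-1)=\tfrac{t}{2}\bigl(2e(R)-s\bigr),
\]
so by the dischargeability hypothesis $2e(R)\ge s+t-1$ their net charge is at least $-\binom{t}{2}+\tfrac{t(t-1)}{2}=0$. Since discharging merely redistributes charge, $\sum_{e}\wbar(e)$ equals the sum of the (non-negative) post-discharge charges, giving $\sum_e\wbar(e)\ge 0$ as required. The only point demanding care is the ``at most two clusters'' claim; once the vertex-disjointness of clusters is invoked it is immediate, and the rest of the argument is bookkeeping.
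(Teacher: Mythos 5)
Your proof is correct and follows essentially the same discharging argument as the paper: partition edges by their relation to clusters, transfer half the benefit $d_R(v)-1$ of each boundary edge to its cluster(s), use vertex-disjointness of clusters to bound the number of transfers per edge by two, and invoke dischargeability to make each cluster's tight edges break even. The only difference is that you verify explicitly the computations (e.g.\ $\wbar(vx)=d_R(v)-1$ and the total $\tfrac{t}{2}(2e(R)-s)\ge\binom{t}{2}$) that the paper leaves implicit.
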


\begin{proof}
	We partition the edges of $G$ into those in clusters, those associated with clusters (meaning they are incident to a vertex in a cluster, but not in a cluster), and others.  Clearly, any edge is associated with at most two clusters.  We transfer half the benefit of each edge associated to a cluster to that cluster, reducing the benefit of that edge and increasing the total benefit of the cluster edges.  The statement that a cluster is dischargeable is precisely that, after this process, the total benefit of the cluster edges is nonnegative.  Each edge incident to a cluster has transferred half of its benefit at most twice and hence still has nonnegative benefit.  Edges not incident to clusters all have nonnegative benefit.  Thus, the total benefit of all edges is nonnegative.
\end{proof}
	

\section{Proof of Main Theorem} 
\label{sec:theorem}

In this section, we first prove that every cluster is either dischargeable or foldable.

\begin{theorem}\label{thm:mainlem}
	If $T$ is a cluster in $G$, then $T$ is either dischargeable or foldable.
\end{theorem}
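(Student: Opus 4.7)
The plan is to show that if a cluster $T$ is not dischargeable, then it must be foldable. Setting $m = e(R)$ and observing that $s + t = r + 1$ (each $x \in T$ has closed neighborhood $T \cup S$ and degree $r$), the not-dischargeable hypothesis $2e(R) < s + t - 1$ rewrites as $2m \leq s + t - 2$, and we need to establish $tm \geq \mu(R)$. I would split into two complementary cases at the threshold $m = s - 1$, which is precisely the divide between the regimes of Theorem~\ref{thm:lex} and Theorem~\ref{thm:q2}.

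First consider $m \geq s - 1$. Combining this with $2m \leq s + t - 2$ immediately gives $s \leq t$, and a direct degree bound closes the case:
\[
	\mu(R) \leq \sum_{v \in S} \binom{d_R(v)}{2} \leq (\Delta(R) - 1) m \leq (s - 2) m \leq tm,
\]
where the first inequality uses $\sum_v \binom{d_R(v)}{2} = 3\#(R,\tri) + \#(R,\cher) \geq 2\#(R,\tri) + \#(R,\cher) = \mu(R)$. In the remaining case $m \leq s - 2$, we have $m < s - 1$ and $\delta(R) \geq 1$, so Theorem~\ref{thm:q2} applies, yielding $\mu(R) \leq \mu(K_{1,p} \cup q K_2) = \binom{p}{2}$ where $p = 2m - s + 1$. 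The not-dischargeable hypothesis becomes exactly $p \leq t - 1$, and the remaining algebra is
\[
	2\Bigl(tm - \binom{p}{2}\Bigr) = t(p + s - 1) - p(p - 1) = p(t + 1 - p) + t(s - 1) \geq 0,
\]
using $p \leq t - 1 \Rightarrow t + 1 - p \geq 2$ and $s \geq 1$.

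I do not foresee a serious obstacle. The key conceptual step is recognizing that the proof bifurcates along $m = s - 1$, matching the two extremal results from Section~\ref{sub:mu}. The dense case is dispatched by an elementary degree count using only the trivial bound $\Delta(R) \leq s - 1$, while the sparse case leverages the exact value $\mu(K_{1,p} \cup q K_2) = \binom{p}{2}$ (triangles and cross-component cherries vanish, leaving only the $\binom{p}{2}$ cherries centered at the hub of the star). The not-dischargeable hypothesis is precisely what forces $p \leq t - 1$, which is the tight condition needed to make the resulting quadratic in $p$ nonnegative.
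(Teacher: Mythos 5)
Your proof is correct. The sparse case ($m\le s-2$) follows the paper's route: both invoke Theorem~\ref{thm:q2} to replace $R$ by $K_{1,p}\cup qK_2$, and your closing computation $2\bigl(tm-\binom{p}{2}\bigr)=p(t+1-p)+t(s-1)\ge 0$ is exactly the content of Lemma~\ref{lem:starmatch} specialized to the non-dischargeable regime $t>p$. Where you genuinely diverge is in the dense case and in how the dichotomy is organized. The paper splits on whether $t\ge\frac{2}{3}(s-2)$, dispatches that case with Lemma~\ref{lem:density} (the bound $\mu(R)\le\frac{2}{3}(s-2)e(R)$), and then verifies that the remaining regime forces $e(R)<\frac{5}{6}s-\frac{7}{6}<s-1$ so that Theorem~\ref{thm:q2} is applicable. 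You instead split directly at $m=s-1$, the applicability threshold of Theorem~\ref{thm:q2}, and handle $m\ge s-1$ by first extracting $s\le t$ from non-dischargeability and then running the elementary chain $\mu(R)\le\sum_v\binom{d_R(v)}{2}\le(\Delta(R)-1)m\le(s-2)m\le tm$, whose first step is the identity in equation~(\ref{eqn:second}). Your version buys a cleaner case division (the sparse case is by construction exactly where Theorem~\ref{thm:q2} applies, with no auxiliary inequality to check) and renders Lemma~\ref{lem:density} unnecessary for this theorem; the paper's version isolates the reusable density bound $\mu(G)\le\frac{2}{3}(n-2)e(G)$ as a standalone lemma. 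One cosmetic remark: the observation $s+t=r+1$ is never actually used in your argument.
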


In order to prove this, we need a couple of lemmas.

\begin{lemma}\label{lem:starmatch}
	If $T$ is a cluster with $R=K_{1,p}\cup qK_2$, then $T$ is either dischargeable or foldable.
\end{lemma}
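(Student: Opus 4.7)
The strategy is to compute the parameters of $R = K_{1,p}\cup qK_2$ explicitly, unpack what dischargeability and foldability mean in these terms, and then verify directly that failure of the former forces the latter.

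First I would record the statistics of $R$: clearly $e(R) = p+q$, and since $R$ contains no triangle while its only cherries are the triples formed by the star's center together with two of its leaves, we have $\mu(R) = \binom{p}{2}$. Two sub-cases can be dispatched immediately. If $p = 0$, then $R = qK_2$ has $\mu(R) = 0$, and maximality of the tight clique forces $q\geq 1$, so $T$ is trivially foldable. If $t = 1$, then dischargeability reduces to $2e(R)\geq s$, which is immediate from $\delta(R)\geq 1$. So the remaining case is $p\geq 1$ and $t\geq 2$.

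In that case, the fact that every $x\in T$ has degree $r$ gives $t + s = r + 1$, and substituting $s = p + 1 + 2q$ yields $t = r - p - 2q$. With these expressions, dischargeability unpacks to $2(p+q)\geq r$ and foldability to $t(p+q)\geq \binom{p}{2}$. The main step is then to assume $2(p+q) < r$ and conclude foldability. Rewriting $r$ as $t + p + 2q$ collapses the failed inequality to $p < t$, so $t\geq p+1$, and then
\[
    t(p+q)\geq (p+1)(p+q)\geq (p+1)\,p\geq \binom{p}{2},
\]
using $q\geq 0$ for the middle inequality and $2(p+1)\geq p-1$ for the last.

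I do not anticipate a real obstacle here; once the parameters of $R$ are tabulated, the argument is a one-line substitution followed by a one-line inequality. The conceptual content is that in the non-dischargeable regime the cluster $T$ is forced to have size at least $p+1$, which is exactly the room needed to absorb the $\binom{p}{2}$ cherries contributed by the star component of $R$.
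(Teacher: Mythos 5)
Your proof is correct and follows essentially the same route as the paper: compute $e(R)=p+q$, $\mu(R)=\binom{p}{2}$, $s=p+1+2q$, observe that failure of dischargeability forces $t>p$, and then bound $t(p+q)\geq(p+1)p\geq\binom{p}{2}$. The detour through $r$ via $t+s=r+1$ and the separate treatment of $p=0$ and $t=1$ are harmless but unnecessary, since the condition $2e(R)\geq s+t-1$ already reduces directly to $p\geq t$.
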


\begin{proof}
	Note that we have $s=p+1+2q$ and $e(R)=p+q$.  Also, $\mu(R)=\binom{p}2$.  Then $T$ is dischargeable if
	\[
		2e(R)=2(p+q)\geq s+t-1=p+2q+t.
	\]
	This holds when $t\leq p$.  If not, i.e., if $t>p$, then
	\[
		t\cdot e(R)=t(p+q)\geq \binom{p}2=\mu(R),
	\]
	and so $T$ is foldable.
\end{proof}

\begin{lemma}\label{lem:density}
	If $G$ is any graph, then $\mu(G)\leq \frac{2}3(n-2)e(G)$.  In particular, if $T$ is a cluster satisfying $t\geq \frac{2}3(s-2)$, then $T$ is foldable. 
\end{lemma}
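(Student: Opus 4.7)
The plan is to prove the edge bound by a clean triple-by-triple comparison, and then to derive the foldability statement as a direct corollary. The observation driving everything is that on a single triple, $\mu$ is never more than $\tfrac{2}{3}$ of the edge count: the induced graph on three vertices is one of $\emp$, $\angel$, $\cher$, or $\tri$, with $(\mu,e)$ equal to $(0,0)$, $(0,1)$, $(1,2)$, or $(2,3)$ respectively, so in every case $\mu \le \tfrac{2}{3}\,e$.

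Given this pointwise inequality, I would sum over all $3$-subsets of $V(G)$. Writing $\mu(G) = \sum_{T \in \binom{V(G)}{3}} \mu(G[T])$ and $\sum_{T} e(G[T]) = (n-2)\,e(G)$, since each edge of $G$ lies in exactly $n-2$ triples (one for each choice of the third vertex), I obtain
\[
    \mu(G) \;=\; \sum_T \mu(G[T]) \;\le\; \frac{2}{3}\sum_T e(G[T]) \;=\; \frac{2}{3}(n-2)\,e(G),
\]
which is the first claim.

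For the ``in particular'' clause, I apply the bound to the graph $R$ on $s$ vertices. The hypothesis $t \ge \tfrac{2}{3}(s-2)$ combined with $\mu(R) \le \tfrac{2}{3}(s-2)\,e(R)$ immediately gives $t\cdot e(R) \ge \mu(R)$, which is precisely the definition of $T$ being foldable (the case $e(R)=0$ being trivial, though note that $\delta(R) \ge 1$ anyway).

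There is no real obstacle here: the only nontrivial ingredient is the pointwise check on the four isomorphism types of triples, and both the double-counting step and the deduction of foldability are routine. The proof is essentially one line once the per-triple inequality is noted.
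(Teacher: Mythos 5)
Your proof is correct and follows essentially the same route as the paper: both rest on the pointwise observation that $\mu(Q)\le \tfrac23 e(Q)$ for every triple $Q$, combined with the double count that each edge lies in $n-2$ triples, and the foldability claim is deduced identically by applying the bound to $R$. Your phrasing (summing the pointwise inequality directly) is if anything slightly cleaner than the paper's rearrangement of the sum over edges.
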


\begin{proof}
	Let $G$ be a graph with vertex set $V$ and edge set $E$.  For $Q\in \binom{V}3$, we define $\mu(Q)$ by $\mu(\tri)=2$, $\mu(\cher)=1$, and $\mu(\angel)=\mu(\emp)=0$.  That is, $\mu(Q)$ is the contribution of $Q$ to $\mu(G)$.  Also, we let $e(Q)=e(G[Q])$ and $E(Q)=E(G[Q])$.  We compute as follows.
	\[
		\mu(G)=\sum_{Q\in \binom{V}3} \mu(Q)
		=\sum_{\substack{Q\in \binom{V}3\\e(Q)\neq 0}}\frac{1}{e(Q)} \sum_{e\in E(Q)} \mu(Q)
		=\sum_{e\in E} \sum_{Q: e\of Q} \frac{\mu(Q)}{e(Q)}
		\leq \frac{2}3(n-2)e(G).
	\]	
The inequality comes from the fact that $\mu(Q)/e(Q)\leq 2/3$ for every triple $Q$ containing edges.  

Now, for the second claim, we have $\mu(R)\leq \frac{2}3 (s-2)e(R)$.  Therefore, by hypothesis,
\[
	t\cdot e(R)\geq \frac{2}3 (s-2)e(R)\geq \mu(R),
\]
and so $T$ is foldable.
\end{proof}

We are now ready to prove our main lemma, Theorem~\ref{thm:mainlem}.

\begin{proof}[Proof of Theorem~\ref{thm:mainlem}]
	By Lemma~\ref{lem:density}, we are done if $t\geq \frac{2}3(s-2)$.  Also, $T$ is dischargeable if $2e(R)\geq s+t-1$.  Thus, in the remaining cases, we have
	\[
		e(R)<\frac{s+t-1}2<\frac{5}6s-\frac{7}6.
	\]
	Let $m=e(R)$.  We know that whether $T$ is foldable depends only on $t$, $m$ and $\mu(R)$, whereas whether $T$ is dischargeable depends only on $t$ and $m$.  By Theorem~\ref{thm:q2}, since $\d(R)\geq 1$, we have that $\mu(R)\leq \mu(K_{1,p}\cup qK_2)$ where $q=s-m-1$ and $p=2m-s+1$.  Thus, if $T$ would be foldable with $R=K_{1,p}\cup qK_2$, then $T$ is foldable.  Therefore, $T$ is foldable (dischargeable) if $T$ is foldable (dischargeable) with $R=K_{1,p}\cup qK_2$.  By Lemma~\ref{lem:starmatch}, we are done.
\end{proof}

It only remains to show that, provided $a\geq 2r/(3\sqrt{3})$, $G$ must contain a cluster, i.e., $G$ must contain an edge of weight $r-1$. 

\begin{lemma}\label{lem:avgwt}
	If $a\geq 2r/(3\sqrt{3})$, then 
	\[
		\sum_{e\in E(aK_{r+1}\cup K_b)} w(e)> (r-2)\frac{r(a(r+1)+b)}{2},
	\]
	i.e., any graph of maximum degree at most $r$ on $a(r+1)+b$ vertices either has fewer triangles than $aK_{r+1}\cup K_b$ or has an edge of weight $r-1$.
\end{lemma}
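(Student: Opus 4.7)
The statement is essentially a direct calculation. My plan is to evaluate both sides explicitly, maximize the resulting cubic in $b$, and verify that the hypothesis $a \ge 2r/(3\sqrt 3)$ is precisely what suffices. For the left-hand side, every edge of a copy of $K_{r+1}$ lies in $r-1$ triangles and every edge of $K_b$ lies in $b-2$ triangles, so
\[
    \sum_{e\in E(aK_{r+1}\cup K_b)} w(e) = a\binom{r+1}{2}(r-1) + \binom{b}{2}(b-2).
\]
Substituting $n = a(r+1)+b$ and clearing the common factor $1/2$ reduces the target inequality to
\[
    a\,r(r+1) > b\bigl[\,r(r-2) - (b-1)(b-2)\,\bigr].
\]

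Next I would bound the right-hand side by maximizing $g(b) := br(r-2) - b(b-1)(b-2)$ over $b\in[0,r]$. Solving $g'(b)=0$ gives $3b^2 - 6b + 2 - r(r-2) = 0$; using the identity $r(r-2)+1 = (r-1)^2$, the positive root is $b^* = 1 + (r-1)/\sqrt{3}$, which lies in $[0,r]$ and is the global maximum of $g$ on $[0,r]$ (since $g$ has leading term $-b^3$). With $u := (r-1)/\sqrt{3}$, a short computation yields
\[
    g(b^*) = (1+u)^2(2u-1) = 2u^3 + 3u^2 - 1 = \frac{2(r-1)^3}{3\sqrt 3} + r(r-2).
\]

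Finally, under the hypothesis $a \ge 2r/(3\sqrt 3)$ we have $ar(r+1) \ge 2r^2(r+1)/(3\sqrt 3)$, and using $r^2(r+1) - (r-1)^3 = 4r^2 - 3r + 1$ the inequality $ar(r+1) > g(b^*)$ reduces to
\[
    (8 - 3\sqrt 3)\,r^2 + (6\sqrt 3 - 6)\,r + 2 > 0,
\]
which is immediate since $3\sqrt 3 < 8$ makes all three coefficients positive. The ``i.e.'' statement then follows: if $G$ has $n = a(r+1)+b$ vertices, $\Delta(G) \le r$, and no edge of weight $r-1$, then every edge of $G$ has weight at most $r-2$, giving $3k_3(G) = \sum_e w(e) \le (r-2)\,e(G) \le (r-2)rn/2 < 3k_3(aK_{r+1}\cup K_b)$. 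There is no real conceptual obstacle; the one observation worth highlighting is that the constant $2r/(3\sqrt 3)$ in the hypothesis is dictated by the dominant cubic term $2u^3 = 2(r-1)^3/(3\sqrt 3)$ of $g(b^*)$ once we divide by $r(r+1)$.
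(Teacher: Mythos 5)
Your proof is correct and follows exactly the route the paper intends: the paper's own proof of this lemma is just the sentence ``The proof is a straightforward calculation,'' and your computation (reducing to $ar(r+1) > br(r-2)-b(b-1)(b-2)$, maximizing at $b^*=1+(r-1)/\sqrt{3}$, and checking the resulting quadratic in $r$ has positive coefficients) is precisely that calculation carried out in full, including the correct derivation of the constant $2r/(3\sqrt{3})$ from the cubic term $2(r-1)^3/(3\sqrt{3})$. The only microscopic imprecision is that $b^*$ is the maximum of $g$ on $[0,r]$ only when $g(b^*)\ge g(0)=0$ (true for $r\ge 2$, and the endpoint $b=0$ is trivially handled by $ar(r+1)>0$ anyway), so nothing is lost.
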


\begin{proof}
	The proof is a straightforward calculation.  
\end{proof}

We are now ready to prove the main result of the paper.

\begin{proof}[Proof of Main Theorem]
	It suffices to prove that, if $G$ is a graph with $\D(G)\leq r$ having the greatest number of triangles, then, provided $a\geq 2r/(3\sqrt{3})$, $G$ contains a copy of $K_{r+1}$.  By Lemma~\ref{lem:avgwt}, the average weight of edges in $G$ is such that some edge must have weight $r-1$.  Thus, $G$ must have a cluster, and by Theorem~\ref{thm:mainlem}, all clusters are foldable or dischargeable.  If all clusters were dischargeable, the average weight would be at most $r-2$, contradicting Lemma~\ref{lem:avgwt}.  If $G$ has a foldable cluster $T$, then $k_3(G)\leq k_3(G_T)$ and $G_T$ contains a $K_{r+1}$ so we are done by induction. 
	
	We now consider the case when $r\leq 6$.  We apply our techniques to the graph $H$ guaranteed by the above, so let $n=n(H)$ and redefine $a$ and $b$ as usual.  It suffices to show that if $r\leq 6$ and $H$ is a graph on at most $2r(r+1)/(3\sqrt{3})+r$ vertices having average weight of edges at most $r-2$, then $k_3(H)\leq k_3(aK_{r+1}\cup K_b)$.  I.e., we need to verify 
	\[
		(r-2)\frac{r(a(r+1)+b)}{6}\leq a\binom{r+1}3+\binom{b}3.
	\]
	This is a finite (and simple) calculation since $a$ is bounded above.  See \cite{Calcs} for details.
\end{proof}


\bibliographystyle{amsplain}
\bibliography{maxdegk3}

\end{document}